\documentclass[a4paper,12pt]{amsart}

\usepackage{amsthm, amsmath, latexsym, amsfonts, epsfig}
\usepackage[all]{xy}
\usepackage{amssymb}
\usepackage{color}
\usepackage{tikz}

\newtheorem{Lemma}{Lemma}[section]
\newtheorem{Proposition}[Lemma]{Proposition}
\newtheorem{Theorem}[Lemma]{Theorem}

\newcommand{\Pic}{\mathrm{Pic}}

\newcommand{\Z}{\mathbb{Z}}

\newcommand{\Q}{\mathbb{Q}}

\newcommand{\bP}{\mathbb{P}}

\newcommand{\mgn}{\overline{\mathcal{M}}_{g,n}}

\newcommand{\sgnmint}{Pr_{g,n}^{\hspace{0.05cm}(m_1, \ldots, m_n)}}
\newcommand{\sgnm}{\overline{Pr}_{g,n}^{\hspace{0.05cm}(m_1, \ldots, m_n)}}

\title[Cohomology of moduli spaces of Prym curves]
{On the rational cohomology of \\ moduli spaces of Prym curves}
\author{Claudio Fontanari}

\email{claudio.fontanari@unitn.it}\curraddr{
{\sc Dipartimento di Matematica \\ Universit\`a degli Studi di Trento\\
Via Sommarive 14 \\ 38123 Trento \\ Italy.}}

\thanks{ {\em 2000 Mathematics Subject Classification}: 14H10}

\begin{document} 
\begin{abstract}
We investigate the low degree rational cohomology groups of the moduli space of twisted Prym curves ${\sgnm}$, where the integer twists $0\leq m_i\leq 1$ have even sum over $i$. We prove that these groups vanish in odd degree $\leq3$ and that the group in degree $2$ is algebraic. In particular, the results cover the classical moduli spaces of Prym curves and Prym curves with simple ramifications.
\end{abstract}

\maketitle

\section{Introduction}\label{sec0}

The first studies of low degree cohomology groups of the moduli space of curves $\overline{M}_{g,n}$ go back to the early nineties. A few years later Arbarello and Cornalba developed an algebro-geometric approach in their work \cite{ArbCor:98}, by exploiting the geometric description of the boundary in terms of moduli spaces of curves with smaller invariant $g$. This method is nowadays known as “Arbarello and Cornalba's induction" and it still stands as a very effective tool to this kind of investigation. For instance, their vanishing result in odd degree has been extended from degree $5$ up to degree $9$ and their approach has been fruitfully applied also in higher degree (see, e.g. \cite{BFP:24, CLP:23, CLPW:24}), by combining their original ideas with new techniques. 

A natural new direction to investigate is how to adapt their method to other moduli spaces of curves: this has been done, for instance, in degree $\leq 3$ for the moduli space of spin curves in \cite{BF:12} and in degree $1$ for the Hurwitz space with marked ramifications in \cite{L:25}.

In this note, we follow this line of thought and address the question for the moduli space of twisted Prym curves.  A {\em smooth twisted prym curve} is a smooth projective curve of genus $g$ over ${\mathbb C}$ together with the choice of $n$ marked points $p_1,\dots, p_n$ and a line bundle $L$ such that $L^{\otimes 2} \simeq \mathcal{O}_C(- \sum_{i=1}^n
m_i p_i )$, for some integers $0 \leq m_1,\dots , m_n \leq 1$ with even $\sum_{i=1}^n m_i$. For fixed $g$, $n$, $m_1,\cdots, m_n$, there is a moduli space ${\sgnmint}$ (see Section \ref{sec1}). These moduli spaces generalize two classical cases: when $i=0$ (or, more generally, all $m_i=0$) we recover the classical moduli space of Prym curves $\mathcal{R}_{g}$ (or, respectively, of $n$-marked Prym curves), and when all $m_i=1$ we recover the moduli space of Prym curves $\mathcal{R}_{g,2n}$ with ramifications at $2n$ prescribed points $p_i$. A natural compactification of these two moduli spaces has already been constructed in both cases: when all $m_i=0$ in \cite{BCF:04} and when all $m_i=1$ in \cite{Bud:24}. Following these previous approaches, in Section \ref{sec1} we introduce a new compactification ${\sgnm}$ for any choice of integers $m_i$ as above. The main feature of this compactification is that it admits a recursive description of the boundary, similar to those used for $\overline{M}_{g,n}$ in \cite{ArbCor:98} and $\overline{S}_{g,n}$ in \cite{BF:12}. That is, one can write the boundary in terms of the same kind of moduli spaces but with smaller invariants $g$ or $n$. The main contribution of this note is the study of low degree cohomology groups of ${\sgnm}$. Our results state that the moduli space ${\sgnm}$ has the same cohomological behaviour as $\overline{M}_{g,n}$ in degree at most $3$. 

More precisely, the odd cohomology in degree $\leq 3$ vanishes: 
\begin{Theorem}\label{prymvanishing}
For every $g$, $n$ and $(m_1, \ldots, m_n)$ as above, we have
$$
H^1(\sgnm, \Q) = H^3(\sgnm, \Q) = 0.
$$
\end{Theorem}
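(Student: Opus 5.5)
We follow Arbarello--Cornalba's induction, in the form used for spin curves in \cite{BF:12}. The space $\sgnm$ is a normal projective variety with finite quotient singularities, since it is a coarse moduli space of a smooth proper Deligne--Mumford stack. Its rational cohomology therefore carries a pure Hodge structure and satisfies Poincaré duality.

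\textbf{Step 1: the exact sequence of the pair.} Write $\partial$ for the boundary and $U=\sgnmint$ for the interior. The long exact sequence of the pair $(\sgnm,\partial)$ together with Poincaré--Lefschetz duality gives
\[
H^{2d-k}_c(U,\Q)\to H^k(\sgnm,\Q)\to H^k(\partial,\Q),
\]
where $d=\dim \sgnm=3g-3+n$ and $H^{2d-k}_c(U)\cong H_k(U)$ up to Tate twist. We then use the standard descent: there is a finite surjective morphism $\coprod_\alpha \overline{X}_\alpha\to\partial$ from the normalizations of the boundary components. Purity and weights (Deligne) then show that the kernel of $H^k(\sgnm)\to\bigoplus_\alpha H^k(\overline{X}_\alpha)$ is the image of the weight-$k$ part of $H_c^k(U)$. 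That image vanishes as soon as $H^k_c(U,\Q)=0$, which holds whenever $k<d$ by Poincaré duality combined with the vanishing of high-degree homology of $U$.

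\textbf{Step 2: the interior.} We need $H_k(U,\Q)=0$ for $k>4g-5+n$ (or an analogous bound), so that $H_c^k(U)=0$ for $k\le 3$ once $g$ is large relative to the degree. The map $U\to \mathcal{M}_{g,n}$ is finite. Homologically, $U$ is a quotient of Teichmüller space by a finite-index subgroup of the mapping class group, and Harer's virtual cohomological dimension bound for $\mathcal{M}_{g,n}$ passes to finite covers. This gives $H_k(U,\Q)=0$ for $k>4g-4+n$ (with $n\ge1$; for $n=0$ the bound is $4g-5$). Consequently $H^k_c(U)=0$ for $k<2d-(4g-4+n)=2g-2+n$. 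For $k\le 3$ this holds once $2g-2+n>3$, so outside finitely many small $(g,n)$ the restriction to the boundary is injective in degrees $\le3$.

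\textbf{Step 3: the boundary.} Following Section \ref{sec1}, each boundary component is dominated, via finite maps, by products of spaces of the same type with smaller invariants. Irreducible nodes give $\overline{Pr}_{g-1,n+2}^{(m_1,\ldots,m_n,\epsilon,\epsilon)}$ for the allowed twists $\epsilon\in\{0,1\}$, together with the $\overline{M}_{g-1,n+2}$-type components coming from the non-Prym (trivial-twist) cases. Reducible nodes give products $\overline{Pr}_{g_1,n_1+1}\times\overline{Pr}_{g_2,n_2+1}$ with parity-compatible twists. By the Künneth formula, $H^1$ of a product is built from $H^1$ of the factors, and $H^3$ from $H^3$ and $H^1\otimes H^2$ terms. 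Induction on $(g,n)$, combined with the known vanishing for $\overline{M}_{g,n}$ \cite{ArbCor:98}, kills all of these. Hence $H^k(\sgnm)$ injects into $0$ for $k=1,3$.

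\textbf{Step 4: base cases, the main obstacle.} The main obstacle is the finitely many small cases where Step 2 fails, namely $2g-2+n\le 3$: genus $0$ with $n\le5$, genus $1$ with $n\le3$, and genus $2$ with $n\le1$. For these we would argue directly.
\begin{itemize}
\item In genus $0$, the square root $L$ is unique, so $\sgnm\cong\overline{M}_{0,n}$.
\item In genus $1$ with small $n$, and in genus $2$, the spaces are quotients of explicit finite covers of $\overline{M}_{1,n}$ and $\overline{M}_2$, such as modular curves $X_1(2)$ or $X_0(2)$ for $\overline{Pr}_{1,1}$, which are rational.
\end{itemize}
In each case the interior has small enough cohomological dimension, or is explicitly rational, or has $H^{odd}$ computable by Getzler-type counts. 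The most delicate point is making the compactification's boundary description precise enough that the finite maps in Step 3 really cover every component, including the exceptional "Prym-twisted" components over nodes where the twist is nontrivial. That requires the explicit construction of Section \ref{sec1}.
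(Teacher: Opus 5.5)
Your Steps 1--3 are the same Arbarello--Cornalba induction the paper runs: the compactly supported sequence, the injection of $H^k$ into the cohomology of the boundary components, Harer's bound for finite covers of $M_{g,n}$, and K\"unneth on the strata. Your list of leftover cases essentially matches the paper's, with two small slips. First, in genus $0$ the Harer bound is $c(0,n)=n-3$, not $4g-4+n$. So your range $k<2g-2+n$ is off by one there, and $(0,6)$ is also a base case for $H^3$. This is harmless, because $\overline{Pr}_{0,n}^{(m_1,\ldots,m_n)}\cong\overline{M}_{0,n}$ for every $n$ and Keel gives $H^{\mathrm{odd}}(\overline{M}_{0,n})=0$. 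Second, the claim in Step 1 that $H^k_c(U)=0$ for all $k<d$ is not what Harer gives; Step 2 states the correct range.

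The genuine gap is Step 4. The base cases are where the induction gets its content, and you do not prove them. The tools you list do not work as stated:
\begin{itemize}
\item ``The interior has small enough cohomological dimension'' is exactly what fails in the base cases, by definition.
\item Rationality kills $H^1$, so your modular-curve remark does handle $\overline{Pr}_{1,1}^{(0)}$. It does not kill $H^3$: a rational threefold can have $H^{2,1}\neq 0$. The base spaces that carry real content for $H^3$ are $\overline{Pr}_{2}$, $\overline{Pr}_{2,1}^{(0)}$ and $\overline{Pr}_{1,3}^{(m_1,m_2,m_3)}$, of dimension $3$ or $4$.
\item ``Getzler-type counts'' are not carried out.
\end{itemize}

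The missing ideas are the ones the paper uses.
\begin{itemize}
\item \emph{Genus $2$.} Every curve is hyperelliptic, and every root of $\mathcal{O}_C$ is either $\mathcal{O}_C$ or $\mathcal{O}_C(q_i-q_j)$ with $q_i,q_j$ Weierstrass points. Take a stable $6$-pointed rational curve (resp.\ $7$-pointed, marked over $p_7$), form the admissible double cover $Y$ branched at $p_1,\ldots,p_6$, and equip it with $\mathcal{O}_Y(q_1-q_2)$ or $\mathcal{O}_Y$. This gives surjections $\overline{M}_{0,6}\to\overline{Pr}_2^{\pm}$ and $\overline{M}_{0,7}\to\overline{Pr}_{2,1}^{(0),\pm}$. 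Pullback is then injective on rational cohomology, and Keel's vanishing finishes.
\item \emph{Genus $1$.} Here $K_C\cong\mathcal{O}_C$, so dualizing $L$ identifies $\overline{Pr}_{1,n}^{(m_1,\ldots,m_n)}$ with the twisted spin space $\overline{S}_{1,n}^{(m_1,\ldots,m_n)}$. The spin base case $H^3=0$ for $n\le 3$ from \cite{BF:12} then applies.
\end{itemize}
Without these, or an equivalent explicit computation, your induction has no base and the theorem is not proved.
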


Moreover, the cohomology in degree $2$ is algebraic and coincides with the Picard group:

\begin{Theorem}\label{prymvH2}
For every $g$, $n$ and $(m_1, \ldots, m_n)$ as above, 
$$
H^2(\sgnm, \Q)  = \Pic(\sgnm) \otimes \Q.
$$
and is generated by the boundary classes of $\sgnm$, and the pullbacks of the Hodge class $\lambda$ and of the cotangent classes $\psi_i$, $i=1, \ldots, n$, along the natural map  $\sgnmint\to M_{g,n}$. 
\end{Theorem}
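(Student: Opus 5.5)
We prove Theorem \ref{prymvH2} by Arbarello and Cornalba's induction, as adapted to spin curves in \cite{BF:12}. Write $X=\sgnm$, $\partial X$ for its boundary and $U=\sgnmint = X\setminus \partial X$ for the open part. Since $X$ is a projective variety with finite quotient singularities (a coarse space of a smooth proper Deligne--Mumford stack), its rational cohomology carries a pure Hodge structure and Poincar\'e duality holds. The starting point is the exact sequence of the pair, or equivalently the Gysin sequence
$$
H^{2d-2}_c(U,\Q)\to H^{2d-2}(X,\Q)\to H^{2d-2}(\partial X,\Q)\to H^{2d-1}_c(U,\Q),
$$
dualized via Poincar\'e duality to
$$
H_2(\partial X)\to H_2(X)\to H_2(X,\partial X)\cong H^{2d-2}(U)^\vee .
$$
Here $d=3g-3+n$ is the dimension.

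The first ingredient is the vanishing of $H^k(U,\Q)$ for large $k$. The map $U\to M_{g,n}$ is finite onto its image. Over the open part, and up to the ramified points, $U$ is a finite unramified cover of $M_{g,n}$, corresponding to a finite index subgroup of the mapping class group. Harer's result that the virtual cohomological dimension of a finite index subgroup equals that of $\Gamma_{g,n}$ then gives $H^k(U,\Q)=0$ for $k>4g-4+n$ (with the usual modifications for $n=0$), exactly as in \cite{ArbCor:98}. If $d$ is large compared to this bound, that is when $g$ is not too small, we get $H^{2d-2}(U)=0$. Hence $H_2(\partial X)\to H_2(X)$ is surjective, and dually $H^2(X)\hookrightarrow H^2(\tilde\partial X)$, where $\tilde \partial X$ is the normalization of the boundary (purity lets us pass to the normalization). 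The boundary components are finite quotients of images of products of moduli spaces $\overline{Pr}$ with smaller invariants, glued along points. We describe them in Section \ref{sec1}, distinguishing by parity the irreducible components $\Delta_0$ (with the various Prym types $\Delta_0', \Delta_0'', \Delta_0^{ram}$) and $\Delta_{i,S}$ (with twist parities determined by $\sum_{j\in S} m_j$). By Künneth and Theorem \ref{prymvanishing} (the $H^1$ vanishing for the factors), $H^2$ of each product is the sum of the $H^2$ of its factors. By induction these are generated by $\lambda$, the $\psi$'s and the boundary classes.

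The main step is to show that a class in $H^2(X)$ whose restrictions to every boundary component lie in the span of tautological classes is itself in the span of $\lambda,\psi_i$ and the boundary classes. Following \cite{ArbCor:98}, I would compute the restriction of each generator to each boundary component, using the standard formulas for the pullbacks of $\lambda,\psi_i,\delta$ under the gluing maps. These formulas are valid on $\overline{Pr}$ because all these classes are pulled back from $\overline{\mathcal M}_{g,n}$, except the boundary classes, which split according to the Prym data. I would then show that compatibility forces the class to be a combination of the generators. The main obstacle is the extra bookkeeping of Prym boundary divisors over a single $\overline{\mathcal M}_{g,n}$ divisor, including exceptional components. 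It also requires checking that the restriction map separates them, for which I would use test curves, or the compatibility with the natural forgetful/gluing maps.

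Finally, the low genus base cases, where the vanishing of $H^{2d-2}(U)$ fails, must be handled directly: genus $0$ with even ramification, which is essentially $\overline{\mathcal M}_{0,n}$-like, and genera $1,2$, using known descriptions or a finer stratification. Algebraicity then follows. Since $H^1(X)=0$ and $X$ is rationally smooth, $H^{2,0}=0$ holds because the generators are algebraic, and hence $\Pic(X)\otimes\Q\to H^2(X,\Q)$ is an isomorphism by the exponential sequence.
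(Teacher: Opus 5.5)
Your preliminary reductions are fine and match what the paper uses. The virtual cohomological dimension bound for the finite-index subgroup defining $U$, duality, and the Arbarello--Cornalba purity lemma give $H^2(X)\hookrightarrow\bigoplus_i H^2(X_i)$ away from the base cases $g=0$, $n\le 5$ and $g=1$, $n\le 2$. K\"unneth with $H^1=0$ then splits $H^2(X_i)$. (Your first displayed sequence is in the wrong degree. What you need is $H^2_c(U)\to H^2(X)\to H^2(\partial X)$ with $H^2_c(U)\cong H^{2d-2}(U)^\vee$, which is what your second sequence expresses.)

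The gap is the step you yourself call the main one: that a class whose restrictions to all boundary pieces are tautological is a combination of $\lambda$, $\psi_i$ and boundary classes. You only announce it, and it is not bookkeeping; it is the entire content of the generation statement. In Arbarello--Cornalba the corresponding extension argument needs three inputs. First, linear independence of the tautological classes for $g\ge 3$, proved with test curves, so that coefficients can be compared on codimension-two strata. Second, the explicit relations in genus $\le 2$. Third, exact pullback formulas for every boundary divisor under every gluing map. None of this is available for $\sgnm$ in your proposal:
\begin{enumerate}
\item Over one boundary divisor of $\overline{\mathcal M}_{g,n}$ there are several Prym divisors. For $n=0$, over $\Delta_{\mathrm{irr}}$ one type comes from the double cover $Pr^{(0,0),2}_{g-1,2}$. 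Another comes from $\overline{Pr}^{(1,1)}_{g-1,2}$ with an exceptional component, along which the map to $\overline{\mathcal M}_{g}$ is ramified, so the self-intersection terms are not those of $\overline{\mathcal M}_{g}$.
\item The genus $1$ and $2$ relations among the Prym classes have to be determined, and independence has to be proved anew.
\item The base cases $g=1$, $n\le 2$ must be shown tautological, not merely algebraic.
\end{enumerate}
Your step would in particular prove that the image of $H^2(X)\to H^2(U)$ is spanned by $\lambda,\psi_i$. That is a Prym analogue of the Harer-type statement Arbarello--Cornalba recover as a by-product, and nothing in the proposal controls this interior contribution. You also make the first assertion, $H^2=\Pic\otimes\Q$, depend on this unproved step.

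The paper separates the two assertions and needs no extension argument. The injection into $\bigoplus_i H^2(X_i)$ respects Hodge decompositions. Hence $H^{2,0}(X)=0$ follows by induction from the algebraicity of the base cases: $\overline{Pr}_{0,n}\cong\overline{\mathcal M}_{0,n}$ with Keel, and $\overline{Pr}_{1,n}\cong\overline S_{1,n}$ with \cite[Lemma 3.2]{BF:12}. Then $H^1=0$ and the exponential sequence give $H^2\cong\Pic\otimes\Q$ without knowing any generators.

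For the generators the paper uses $\Pic\otimes\Q\cong A_{d-1}\otimes\Q$ (finite quotient singularities) and the excision sequence $A_{d-1}(\partial X)\to A_{d-1}(X)\to A_{d-1}(U)\to 0$. So boundary classes plus generators of $A_{d-1}(U)\otimes\Q$ suffice. The latter come from Putman's theorem on $H^2$ of finite-index subgroups of the mapping class group containing the Torelli group, which the paper invokes for $g\ge 5$.

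To close your gap you need an input of this kind for the open part. Alternatively, you must actually carry out the extension argument with the Prym-specific pullback formulas, relations and independence results.
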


In particular, notice that these theorems cover the case where $i=0$ (or more generally all $m_i=0$), or in other words the classical moduli space of Prym curves $\mathcal{R}_{g}$ (or more generally of $n$-marked Prym curves) with the compactification constructed in \cite{BCF:04}. It is worth mentioning that one component of this compactification is isomorphic to the classical compactification via admissible covers provided in \cite{B:77}. They also cover the case where all $m_i=1$, that is the case of the moduli space of Prym curves $\mathcal{R}_{g,2n}$ with ramifications at $2n$ prescribed points $p_i$ with the compactification considered in \cite{Bud:24}.

To prove the first theorem we adapt the inductive approach introduced for
$\overline{M}_{g,n}$ in \cite{ArbCor:98} to ${\sgnm}$. To run the induction we make use of two ingredients: first we use a generalization to covers of $M_{g,n}$ of Harer's vanishing result on the compactly supported cohomology of $M_{g,n}$ proved in \cite{Harer:86}. This straightforward generalization has been proven in \cite{BF:12}. Secondly, we use the above mentioned recursive structure of the boundary of ${\sgnm}$. In such a way, the problem reduces to the study of some cohomology groups of a finite number of base cases. This is presented in Section \ref{sec3}. The proof of the second theorem relies again on the above mentioned generalization of Harer's vanishing result and on the study of the Chow group. This is contained in Section \ref{sec4}.

\medskip

{\bf Conventions.} Throughout, we work over the field ${\mathbb C}$ of complex numbers
and all cohomology groups are intended to be with rational
coefficients.
 
\vspace{0.2cm} {\bf Acknowledgments.} 
This research started during the
workshop “Cycles on moduli spaces” at the Centre International de Recontres Mathématiques (CIRM) in Luminy, 2025, when Carolina Tamborini and Sara Torelli raised the question whether the approach 
developed in \cite{BF:12} for moduli of spin curves might be adapted also to the Prym case. 
The author is grateful to the organizers of the conference for the stimulating atmosphere and to Carolina and Sara for many fruitful conversations. A special acknowledgement is due to Karl Christ for pointing out a gap in a previous version of this manuscript. The author is member of GNSAGA of INdAM (Italy).

\section{Moduli of twisted Prym curves}\label{sec1}
 
In this section we introduce the moduli space $\sgnmint$ of \emph{smooth twisted Prym curves}, together with its compactification $\sgnm$ of \emph{twisted Prym curves}. For that, let us fix non-negative integers $g,n,m_1,\dots,m_n$ such that
\begin{gather*}
    2g-2+n>0,\quad 0\le m_i\le 1\text{ for all }i,\quad \text{and}\quad \sum_{i=1}^n m_i \equiv 0 \pmod 2.
\end{gather*}

\noindent We define the moduli space of \emph{smooth twisted Prym curves}
\begin{eqnarray*}
\sgnmint &:=& \{ [(C, p_1, \ldots, p_n; L, \alpha]:
(C, p_1, \ldots, p_n) \textrm{ is a genus $g$} \\
& & \textrm{smooth projective curve with $n$ marked points}; \\
& & L \textrm{ is a line bundle of degree $-\frac{1}{2}  \sum_{i=1}^n
m_i$ on $C$ } \\ 
& &\textrm{with an isomorphism $\alpha: L^{\otimes 2}\simeq \mathcal{O}_C(- \sum_{i=1}^n
m_i p_i)$} \}. 
\end{eqnarray*}

Equivalently, a point of $\sgnmint$ determines a double cover $\widetilde C\to C$, branched exactly at the marked points $p_i$ with $m_i=1$; marked points with $m_i=0$ are \emph{not branch points}. The parity condition $\sum_i m_i \equiv 0 \pmod 2$ is precisely the Riemann--Hurwitz condition for the existence of such a cover.

We now define a compactification using \emph{quasi-stable curves}: these are semistable curves (i.e., nodal curves in which every rational component has at least two special points), such that rational components having exactly two special points (called \emph{exceptional components}) are pairwise disjoint.

\begin{eqnarray*}
\sgnm &:=& \{ [(C, p_1, \ldots, p_n; L; \alpha)]:
(C, p_1, \ldots, p_n) \textrm{ is a genus $g$} \\
& & \textrm{ quasi-stable projective curve with $n$ marked points}; \\
& &L \textrm{ is a line bundle of degree $-\frac{1}{2}  \sum_{i=1}^n
m_i$
on $C$ having} \\
& & \textrm{degree $1$ on every exceptional component of $C$,} \\
& &\textrm{and } \alpha: L^{\otimes 2} \to \mathcal{O}_C(- \sum_{i=1}^n m_i p_i)
\textrm{ is a homomorphism} \\
& & \textrm{which is nonzero at a general point of every}\\
& & \textrm{non-exceptional component of $C$} \}.
\end{eqnarray*}

The sets $\sgnmint$ and $\sgnm$ carry a natural structure of analytic variety as in \cite{BCF:04}, \S 1.3 (see also \cite{Bud:24}, proof of Proposition 2.5). 

It is well known that a compactification of the moduli space of Prym varieties can be given in terms of admissible covers (see e.g. \cite{B:77, Bud:24}). In this case one compactifies Prym data by viewing them as double coverings of stable curves and allowing suitable degenerations of the covering over the nodes. A key feature of the compactification $\sgnm$ introduced above is that 
it suggests a recursive description of the boundary in terms of moduli spaces with smaller invariants via gluing morphisms (in analogy with the boundary stratification of $\overline{M}_{g,n}$). 

More precisely, in order to make our inductive proof properly work, we should introduce, for every integer $k$ with $0 \le k \le n$ such that $n-k$ is even, the analogous compactification of the following moduli space:

\begin{eqnarray*}
{Pr}_{g,n}^{(m_1, \ldots, m_k, 0, \ldots, 0), k} &:=& \{ [(C, p_1, \ldots, p_n; L_1, 
\ldots, L_{\frac{n-k}{2}}, L)]:
(C, p_1, \ldots, p_n) \\
& & \textrm{is a genus $g$ projective curve with $n$ marked} \\
& & \textrm{points; $L_j$ is a line bundle of degree} -\frac{1}{2}  \sum_{i=1}^k m_i \\
& & \textrm{on the curve } C_j 
\textrm{ such that } L_j^{\otimes 2}\simeq \mathcal{O}_{C_j}(- \sum_{i=1}^k m_i p_i) \\ 
& & \textrm{and } 
L_{j+1} = \nu_j^*(L_j),
\textrm{ where we set } L_{\frac{n-k}{2}+1} :=L \\
& & \textrm{and } C_{\frac{n-k}{2}+1} := C, \textrm{ $C_j$ is obtained from } C_{j+1} \textrm{ by} \\
& & \textrm{identifying into an ordinary node the pair of} \\
& & \textrm{marked points } (p_{2n-k-2j}, p_{2n-k-2j-1}), \textrm{ and} \\
& & \nu_j: C_{j+1} \to C_j \textrm{ is the partial normalization map} \}. 
\end{eqnarray*}

By definition, if $\overline{Pr}_{g,n}^{(m_1, \ldots, m_k, 0, \ldots, 0), k}$ is the natural compactification 
of ${Pr}_{g,n}^{(m_1, \ldots, m_k, 0, \ldots, 0), k}$ defined in terms of quasi-stable curves, we have the boundary map
$
   \mu: {Pr}_{g-1,n+2}^{(m_1, \ldots, m_{k}, 0, \ldots, 0), k} \to
\overline{Pr}_{g,n}^{(m_1, \ldots, m_k, 0, \ldots, 0), k} 
$
sending

\noindent
$[(C, p_1, \ldots, p_n; L_1, \ldots, L_{\frac{n+2-k}{2}},L)]$ to
$[(C_{\frac{n+2-k}{2}}, p_1, \ldots, p_n; L_1, \ldots, L_{\frac{n-k}{2}},$ $L_{\frac{n+2-k}{2}})]$.

On the other hand, in order to make our paper more readable, we are going to freely identify the compactifications of ${Pr}_{g,n}^{(m_1, \ldots, m_k, 0, \ldots, 0), k}$ for every admissible integer $k$ with $\overline{Pr}_{g,n}^{(m_1, \ldots, m_n)}$, exactly as it was done in \cite{BF:12} in the completely analogous case of spin moduli spaces.

We stress anyway that, to get an inductive structure of the relevant moduli spaces, it is necessary to introduce twisted Prym curves, even when interested in compactifying the classical case of Prym curves. To make this more explicit, let us look for instance at the case when $n=0$ (hence, all $m_i=0$). Consider first a general point of the boundary divisor
\[
\Delta_i \subset \overline M_{g},
\]
parametrizing reducible curves $C=C_1\cup C_2$ meeting at a node. In this case, a Prym structure on $C$ is simply given by the choice of Prym structures on the two components $C_1$ and $C_2$. This behavior is compatible with a recursive description of the boundary: in fact, this yields boundary components of $\overline{\mathcal{P}}_g=\overline{Pr}_{g,0}$ which are image of the gluing morphisms
$$\mu_i : \overline{Pr}_{i,1}^{(0)}\times \overline{Pr}_{g-i,1}^{(0)}\rightarrow \overline{\mathcal{P}}_g.$$
In other words, we can picture the gluing map as follows:
\begin{center}

\tikzset{every picture/.style={line width=0.75pt}} %set default line width to 0.75pt        

\begin{tikzpicture}[x=0.75pt,y=0.75pt,yscale=-1,xscale=1]
%uncomment if require: \path (0,300); %set diagram left start at 0, and has height of 300

%Straight Lines [id:da09082034731920474] 
\draw    (119.83,90) -- (270.5,200) ;
%Straight Lines [id:da5904424017221581] 
\draw    (180.08,89.83) -- (30.92,200.17) ;

% Text Node
\draw (48.67,118.07) node [anchor=north west][inner sep=0.75pt]    {$\sqrt{\mathcal{O}_{C}{}_{_{1}}}$};
% Text Node
\draw (217,119.4) node [anchor=north west][inner sep=0.75pt]    {$\sqrt{\mathcal{O}_{C_{2}}}$};
% Text Node
\draw (52,190.07) node [anchor=north west][inner sep=0.75pt]    {$C_{1}$};
% Text Node
\draw (220.67,192.73) node [anchor=north west][inner sep=0.75pt]    {$C_{2}$};

\end{tikzpicture}

\end{center}

\noindent The situation is more complicated over the irreducible boundary divisor
$$
\Delta_{\mathrm{irr}}\subset \overline M_{g},
$$
whose general element is an irreducible nodal curve $C$ with only one node. Let
$$
\nu:Y\to C
$$
be its normalization, and let $p_1,p_2\in Y$ be the preimages of the node. There are two possible Prym structures on $C$. 

The first one is obtained by gluing the Prym datum on $Y$ via an identification of the fibers at $p_1$ and $p_2$, and this situation is compatible with a recursive description of the boundary. These Prym curves in fact lie in the image of the gluing morphism
$$\mu: {Pr}_{g-1,2}^{(0,0),2} \rightarrow \overline{\mathcal{P}}_g,$$
which we can illustrate as: 

\begin{center}

\tikzset{every picture/.style={line width=0.75pt}}      

\begin{tikzpicture}[x=0.75pt,y=0.75pt,yscale=-1,xscale=1]

\draw    (374.17,211.5) .. controls (292.17,284.17) and (299.17,53.83) .. (371.83,126.5) ;
%Straight Lines [id:da6611478529800419] 
\draw    (371.83,126.5) -- (440.83,202.17) ;
%Straight Lines [id:da8358530111336734] 
\draw    (374.17,211.5) -- (434.83,155.33) ;
%Straight Lines [id:da5406435543054208] 
\draw    (178.63,170.5) -- (247.17,169.69) ;
\draw [shift={(249.17,169.67)}, rotate = 179.32] [color={rgb, 255:red, 0; green, 0; blue, 0 }  ][line width=0.75]    (10.93,-3.29) .. controls (6.95,-1.4) and (3.31,-0.3) .. (0,0) .. controls (3.31,0.3) and (6.95,1.4) .. (10.93,3.29)   ;
%Curve Lines [id:da07561863483290121] 
\draw    (113.17,239.33) .. controls (93.5,198.67) and (32.17,184.33) .. (44.83,155) .. controls (57.5,125.67) and (118.33,127.67) .. (113.17,93.67) ;
%Shape: Circle [id:dp6356783798103952] 
\draw   (89,210.63) .. controls (89,211.11) and (89.39,211.5) .. (89.88,211.5) .. controls (90.36,211.5) and (90.75,211.11) .. (90.75,210.63) .. controls (90.75,210.14) and (90.36,209.75) .. (89.88,209.75) .. controls (89.39,209.75) and (89,210.14) .. (89,210.63) -- cycle ;
%Shape: Circle [id:dp2861724906161758] 
\draw   (81.75,127.63) .. controls (81.75,127.14) and (82.14,126.75) .. (82.63,126.75) .. controls (83.11,126.75) and (83.5,127.14) .. (83.5,127.63) .. controls (83.5,128.11) and (83.11,128.5) .. (82.63,128.5) .. controls (82.14,128.5) and (81.75,128.11) .. (81.75,127.63) -- cycle ;

% Text Node
\draw (328.67,78.23) node [anchor=north west][inner sep=0.75pt]    {$\sqrt{\mathcal{O}}_{C}$};
% Text Node
\draw (71.25,209.65) node [anchor=north west][inner sep=0.75pt]    {$p_{1}$};
% Text Node
\draw (77.25,127.4) node [anchor=north west][inner sep=0.75pt]    {$p_{2}$};
% Text Node
\draw (116.79,229.48) node [anchor=north west][inner sep=0.75pt]    {$Y$};
% Text Node
\draw (332.13,230.73) node [anchor=north west][inner sep=0.75pt]    {$C$};
% Text Node
\draw (59.38,79.07) node [anchor=north west][inner sep=0.75pt]    {$\sqrt{\mathcal{O}_{Y}}$};

\end{tikzpicture}

\end{center}

Notice that there are two different ways to glue $\sqrt{\mathcal{O}_{Y}}$ on $Y$ to 
$\sqrt{\mathcal{O}}_{C}$ on $C$: this is precisely the reason why the domain of the 
gluing morphism is not ${Pr}_{g-1,2}^{(0,0)}$, but its \'etale double cover 
${Pr}_{g-1,2}^{(0,0),2}$, parametrizing also one choice of  
$\sqrt{\mathcal{O}}_{C}$ between the two square roots induced by $\sqrt{\mathcal{O}_{Y}}$. 

A second possibility is obtained starting from a square root $L$ of $\mathcal{O}_Y(-p_1-p_2)$ as follows: one replaces $C$ by a quasi-stable curve
$$
C'=Y\cup E,
$$
where $E\simeq \mathbf P^1$ is an exceptional component meeting $Y$ at $p_1$ and $p_2$. The line bundle $L$ extends uniquely to a line bundle $L'$ on $C'$ by setting
$$
L'|_Y=L, \qquad L'|_E=\mathcal O_E(1).
$$
Now $L'$ satisfies $(L')^{\otimes 2}\cong \mathcal O_{C'}$, so $(C',L')$ is a natural twisted Prym curve. Notice that every Prym structure of this second type is realized as the image of a gluing morphism

$$\mu: \overline{Pr}_{g-1,2}^{(1,1)} \rightarrow \overline{\mathcal{P}}_g.$$

\noindent We can illustrate this as:

\begin{center}

\tikzset{every picture/.style={line width=0.75pt}} %set default line width to 0.75pt        

\begin{tikzpicture}[x=0.75pt,y=0.75pt,yscale=-1,xscale=1]

%Curve Lines [id:da5042172545140777] 
\draw    (371.67,194.83) .. controls (386.33,164.83) and (268,136.83) .. (267.33,117.83) .. controls (266.67,98.83) and (384.33,103.5) .. (391.33,78.17) ;
%Straight Lines [id:da600456030844549] 
\draw    (343,39.5) -- (289,206.5) ;

% Text Node
\draw (131.67,97.23) node [anchor=north west][inner sep=0.75pt]    {$\sqrt{\mathcal{O}_{Y}( -p_{1} -p_{2})}$};
% Text Node
\draw (309.36,154.38) node [anchor=north west][inner sep=0.75pt]  [rotate=-359]  {$p_{1}$};
% Text Node
\draw (326,105.23) node [anchor=north west][inner sep=0.75pt]    {$p_{2}$};
% Text Node
\draw (281.33,62.9) node [anchor=north west][inner sep=0.75pt]    {$\mathcal{O}_{E}( 1)$};
% Text Node
\draw (263,176.23) node [anchor=north west][inner sep=0.75pt]    {$E$};
% Text Node
\draw (393.67,70.23) node [anchor=north west][inner sep=0.75pt]    {$Y$};

\end{tikzpicture}

\end{center}

This case, hence, yields an obstruction to an inductive description of the boundary in terms of untwisted Prym data on lower genus curves.
This is precisely the reason why in order to get an inductive structure on the boundary one has to consider twisted Prym structures as well.\\

In the sequel, we will need the following.

\begin{Lemma}\label{lemI} The moduli spaces $\sgnmint$ and $\sgnm$ are the union of $2$ irreducible components if $m_i=0$ for every $i$, otherwise they are irreducible. 
\end{Lemma}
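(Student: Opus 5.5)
We first treat the open part $\sgnmint$ and then pass to $\sgnm$ by density. The forgetful map $\pi:\sgnmint\to M_{g,n}$ is finite and étale in the orbifold sense. Its fiber over $[C,p_1,\dots,p_n]$ is the set of square roots $L$ of $\mathcal{O}_C(-\sum m_ip_i)$, which is a torsor under $J(C)[2]\cong(\Z/2)^{2g}$. So the irreducible components of $\sgnmint$ correspond to the orbits of the monodromy action of $\pi_1(M_{g,n})$ (the mapping class group $\Gamma_{g,n}$) on this torsor. Equivalently, $L$ corresponds to a double cover $\widetilde C\to C$ branched exactly at the $p_i$ with $m_i=1$, that is, to a surjection $\pi_1(C\setminus\{p_i\})\to\Z/2$ sending the small loop $\gamma_i$ around $p_i$ to $m_i$. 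The trivial homomorphism is allowed here, since $L=\mathcal{O}_C$ is a square root when all $m_i=0$. So we must count the $\Gamma_{g,n}$-orbits on
$$\{\phi\in H^1(C\setminus\{p_1,\dots,p_n\},\Z/2):\ \phi(\gamma_i)=m_i\}.$$

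\textbf{Case all $m_i=0$.} Here $\phi$ factors through $H^1(C,\Z/2)$, and $\Gamma_{g,n}$ acts through $Sp(2g,\Z/2)$. This action fixes $0$ and is transitive on nonzero vectors, a classical fact: symplectic transvections act transitively on the nonzero elements of a nondegenerate symplectic $\mathbb{F}_2$-space, and every element of $Sp(2g,\Z/2)$ lifts to the mapping class group. This gives exactly two components: the trivial root $L=\mathcal{O}_C$ and the genuine Prym structures $\mathcal{R}_{g,n}$. When $g=0$ only the trivial root exists, and the count must be read accordingly. I will note this degenerate case explicitly and interpret the statement for $g\ge1$.

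\textbf{Case some $m_i=1$.} The set of admissible $\phi$ is now a torsor under $H^1(C,\Z/2)$, and it contains no $\Gamma_{g,n}$-fixed class. To prove transitivity, fix two branch indices $i\neq j$ with $m_i=m_j=1$ (their number is even and positive). Take a simple loop $\delta$ on $C\setminus\{p_k\}$ enclosing exactly $p_i$ and one handle, or, more simply, the arcs joining $p_i$ to $p_j$. The point-pushing and half-twist type mapping classes in $\Gamma_{g,n}$, for example dragging $p_i$ around a symplectic basis curve $a$ or $b$, change $\phi$ by $\phi\mapsto\phi+\phi(\gamma_i)\cdot a^\vee$. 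Since $\phi(\gamma_i)=1$, these moves realize translation by every element of a basis of $H^1(C,\Z/2)$, hence by the whole group. So the action is transitive and $\sgnmint$ is irreducible. A cleaner formulation: the kernel of $\Gamma_{g,n}\to\Gamma_g$ contains $\pi_1(C)$ (the point-pushing subgroup for $p_i$), and it acts on the torsor by translation via $x\mapsto\phi(\gamma_i)\,\mathrm{PD}(x)$.

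\textbf{Passing to $\sgnm$.} I would show that $\sgnmint$ is dense in $\sgnm$, that is, every quasi-stable twisted Prym curve is smoothable as a twisted Prym curve. This follows from the local deformation theory of \cite{BCF:04}, \S 1.3 (and \cite{Bud:24}): the deformation space of $(C,L,\alpha)$ is smooth and maps finitely onto the versal deformation of the stable model, so no component lies entirely in the boundary. Consequently the components of $\sgnm$ are the closures of those of $\sgnmint$. The two closures in the untwisted case remain distinct, since the trivial-root locus is closed and $\mathcal{O}$ does not degenerate to nontrivial roots.

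The main obstacle is this density step, since the paper's compactification is only sketched. I would rely on the cited local analysis, in which the boundary has codimension one and is locally cut out by a coordinate.
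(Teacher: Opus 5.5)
Your proposal is correct, but it takes a different route from the paper.

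\textbf{The paper's route.} The paper works directly on the compactified space $\sgnm$. In the twisted case it forgets the unbranched points, giving $\pi:\sgnm\to\overline{Pr}_{g,m}^{\hspace{0.05cm}(1,\ldots,1)}$. It then invokes Budzinski's irreducibility of the fully branched compactified space (Proposition 2.5 of \cite{Bud:24}). Irreducibility follows because $\pi$ is proper and surjective with irreducible equidimensional fibres. The untwisted case is only asserted, via the dichotomy of trivial versus nontrivial $L$ (what the paper calls ``parity''). The statement for $\sgnmint$ is then read off as an open subset.

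\textbf{Your route.} You compute the monodromy of the finite cover $\sgnmint\to M_{g,n}$ acting on the $H^1(C,\Z/2)$-torsor of admissible classes $\phi$.
\begin{itemize}
\item In the untwisted case you use transitivity of $Sp(2g,\Z/2)$ on nonzero vectors.
\item In the twisted case you use the point-pushing subgroup at a branch point $p_i$. Your translation formula $\phi\mapsto\phi+\phi(\gamma_i)\,\mathrm{PD}[\delta]$ is right: the push equals $T_{\delta'}T_{\delta''}^{-1}$ and $[\delta']+[\delta'']=[\gamma_i]$ mod $2$. Since the translations compose additively, the action is transitive.
\end{itemize}

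\textbf{Comparison.} Your approach is self-contained on the open part and actually proves the untwisted count that the paper takes for granted. It also correctly exposes that for $g=0$ with all $m_i=0$ the space is irreducible, which matches the paper's own identification $\overline{Pr}_{0,n}^{\hspace{0.05cm}(m_1,\ldots,m_n)}\cong\overline{M}_{0,n}$. The cost is that you need a separate density statement for $\sgnmint\subset\sgnm$, taken from the local deformation theory. In the twisted case the paper's fibre argument avoids this by reducing to a compact space already known to be irreducible. The paper's ``enough to prove it for $\sgnm$'' in the untwisted case implicitly needs the same density you make explicit.

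\textbf{Minor point.} Delete the references to half-twists and to the second branch point $p_j$. Half-twists swap labelled points, so they are not in the pure mapping class group. Only the push of a single branch point is needed.
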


\begin{proof} 
It is enough to prove the statement for $\sgnm$. First of all note that if $m_i=0$ for every $i$, then the two irreducible components are cut out by the parity of the line bundle $L$ defining the Prym structure $L^{\otimes 2}\simeq \mathcal{O}_C $. Otherwise, there is some $j$ such that $m_j\neq 0$. In this case let $\pi: \sgnm \to \overline{Pr}_{g,m}^{\hspace{0.05cm}(1, \ldots, 1)}$ be the natural projection forgetting all 
marked points $p_i$'s such that $m_i=0$. 
By Proposition 2.5 in \cite{Bud:24}, $\overline{Pr}_{g,m}^{\hspace{0.05cm}(1, \ldots, 1)}$ is irreducible. Thus, the irreducibility of $\sgnm$ follows from the fact that $\pi$ is a proper surjective morphism with irreducible and equidimensional fibers.
\end{proof}

\section{Proof of Theorem \ref{prymvanishing}}\label{sec3}

In this Section we prove Theorem \ref{prymvanishing}. 
\subsection{Inductive approach.}\label{subI}  We adapt the inductive approach developed for $\overline{M}_{g,n}$ by Arbarello
and Cornalba in \cite{ArbCor:98} to the moduli spaces of twisted Prym curves $\sgnm$. Consider the long exact sequence of cohomology with compact supports:
\begin{equation}\label{exact}
\ldots \to H^k_c({Pr}_{g,n}^{\hspace{0.05cm}(m_1, \ldots, m_n)}) \to
H^k(\sgnm) \to H^k(\partial {Pr}_{g,n}^{\hspace{0.05cm}(m_1, \ldots, m_n)})
\to \ldots
\end{equation}
Whenever $H^k_c({Pr}_{g,n}^{\hspace{0.05cm}(m_1, \ldots, m_n)})
= 0$, there is an injection 
$$H^k(\sgnm) \hookrightarrow
H^k(\partial {Pr}_{g,n}^{\hspace{0.05cm}(m_1, \ldots, m_n)}).$$
Therefore, we can then exploit the recursive nature of the boundary, which is built as products of moduli spaces of twisted Prym curves with smaller invariant $g$. Indeed, each irreducible
component of the boundary of $\sgnm$ is essentially (see the detailed discussion is  Section \ref{sec1}) the image of a morphism:
$$
\mu_i: X_i \to \sgnm
$$
where either
$$
X_i = \overline{Pr}_{a, s+1}^{\hspace{0.05cm} (u_1, \ldots, u_{s+1})}
\times \overline{Pr}_{b, t+1}^{\hspace{0.05cm} (v_1, \ldots, v_{t+1})}
$$
with $a+b=g$, $s+t=n$, and $\sum_{i=1}^s u_i +\sum_{i=1}^t v_i =
\sum_{i=1}^n m_i$; or
$$
X_i = \overline{Pr}_{g-1, n+2}^{\hspace{0.05cm} (m_1, \ldots, m_n,
m_{n+1}, m_{n+2})}.
$$
Moreover, as in \cite[Lemma~2.6]{ArbCor:98}, 
\begin{equation}
H^k(\sgnm) \to \oplus_i H^k(X_i) \label{injective}
\end{equation}
is injective whenever $H^k(\sgnm) \to H^k(\partial
{Pr}_{g,n}^{\hspace{0.05cm}(m_1, \ldots, m_n)})$ is. \\

Putting all together, to prove the vanishing 
$$H^k(\sgnm)=0$$
of some odd $k$, one can apply the following inductive method.
Assume that $H^k_c({{Pr}}_{g,n}^{\hspace{0.05cm}(m_1, \ldots, m_n)})=0$ for all $k\leq c(g,d)$, where $c(g,n)$ is some non-negative integer depending on $g$ and $n$. Then 
$$H^k(\sgnm) \hookrightarrow \oplus_i H^k(X_i), \quad \text{for}\quad k\leq c(g,n),$$
where the $X_i$'s are moduli spaces of twisted Prym curves with strictly smaller genus.
In particular, if the vanishing $H^r(\overline{{Pr}}_{g',n'}^{\hspace{0.05cm}(m_1, \ldots, m_n)})=0$ for all odd $r\leq k$ holds for the pairs $(g',n')$ with $c(g',n')\leq k$, then the vanishing $H^k(\sgnm)=0$ holds for all $(g,n)$.

To prove Theorem~\ref{prymvanishing} using this inductive approach, it is therefore enough to show that:
\begin{enumerate}
\item $H^1_c({Pr}_{g,n}^{\hspace{0.05cm}(m_1, \ldots, m_n)}) =
H^3_c({Pr}_{g,n}^{\hspace{0.05cm}(m_1, \ldots, m_n)}) = 0$ for almost
all the values of $g$, $n$;
\item we check that $H^1(\sgnm) =
H^3(\sgnm) = 0$ for all the remaining values of $g$, $n$.
\end{enumerate} 

We show the first fact in Subsection \ref{subV} and the second one in Subsection \ref{subB}.

\subsection{Vanishing of compactly supported cohomology.}\label{subV} 
We rely on a generalization to finite étale connected covers of $M_{g,n}$ of Harer's vanishing result for $M_{g,n}$. This generalization is proved in \cite{BF:12}.  

More precisely, using the natural correspondence between level structures as in \cite[Section $2$]{BF:12} and finite étale connected covers of $M_{g,n}$, we give the equivalent formulation of \cite[Proposition $2.1$]{BF:12}. 

\begin{Proposition}\label{harer} Let $M\to M_{g,n}$ be a finite étale connected cover. Then
$H_k(M, {\mathbb Q})=0$ for $k > c(g,n)$, where
$$
c(g,n)= \left\{
\begin{array}{cc}
n-3 & g=0; \\
4g-5 & g > 0, n=0; \\
4g-4+n & g>0 , n > 0. \end{array}
\right.
$$
\end{Proposition}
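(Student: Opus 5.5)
The plan is to deduce the statement from Harer's theorem on the virtual cohomological dimension of the mapping class group, via the orbifold description of $M$. Since $M\to M_{g,n}$ is finite étale and connected, $M$ is (rationally) a quotient $\mathcal{T}_{g,n}/\Gamma$. Here $\mathcal{T}_{g,n}$ is Teichmüller space, which is contractible, and $\Gamma\subseteq \Gamma_{g,n}$ is a finite index subgroup of the pure mapping class group: the one corresponding to the cover through the correspondence with level structures recalled from \cite[Section 2]{BF:12}. Finite stabilizers are invisible with $\Q$ coefficients, so
$$H_k(M,\Q)\cong H_k(\Gamma,\Q).$$
The statement therefore reduces to a vanishing of rational group homology of $\Gamma$ above degree $c(g,n)$.

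The key input is Harer's result \cite{Harer:86}: $\Gamma_{g,n}$ has virtual cohomological dimension equal to $c(g,n)$, with the three cases $n-3$, $4g-5$, $4g-4+n$ exactly as in the statement. Virtual cohomological dimension is unchanged on passing to finite index subgroups. Concretely, one can choose a torsion-free finite index subgroup $\Gamma'\subseteq\Gamma$ (for instance the intersection with a level $\ge3$ subgroup), so that $\mathrm{cd}_\Q\Gamma'=\mathrm{vcd}\,\Gamma_{g,n}=c(g,n)$. A transfer argument then shows that $H_k(\Gamma,\Q)$ is a direct summand of $H_k(\Gamma',\Q)$. Hence $H_k(\Gamma,\Q)=0$ for $k>c(g,n)$, which gives $H_k(M,\Q)=0$ in the same range.

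To phrase this geometrically, Harer's proof produces a $\Gamma_{g,n}$-equivariant deformation retraction of $\mathcal{T}_{g,n}$ onto a spine of dimension $c(g,n)$, built from arc or ribbon graph complexes. Quotienting this spine by the subgroup $\Gamma$ instead of by $\Gamma_{g,n}$ shows that $M$ is rationally homotopy equivalent to a finite CW complex of dimension $c(g,n)$, which again gives the vanishing.

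The only delicate step is the identification of the given finite étale connected cover with $\mathcal{T}_{g,n}/\Gamma$ for a finite index subgroup $\Gamma$. This follows from covering space theory applied to the orbifold fundamental group of $M_{g,n}$, which is $\Gamma_{g,n}$: connected finite covers correspond to finite index subgroups. Taking this together with the fact that Harer's bound is a statement about the whole group action, not just about $M_{g,n}$, the argument is exactly the one in \cite[Proposition 2.1]{BF:12}. Finally, by Poincaré duality on the orbifold $M$ of real dimension $2d$, $d=3g-3+n$, the proposition is equivalent to $H^k_c(M,\Q)=0$ for $k<2d-c(g,n)$, which is the form used in the inductive argument.
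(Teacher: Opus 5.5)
Your argument is correct and takes the paper's approach. The paper gives no independent proof; it quotes \cite[Proposition 2.1]{BF:12}, the extension of Harer's bound from $M_{g,n}$ to covers coming from finite-index subgroups $\Gamma\subseteq\Gamma_{g,n}$, and your route ($M=\mathcal{T}_{g,n}/\Gamma$, $\mathrm{vcd}\,\Gamma_{g,n}=c(g,n)$, a torsion-free finite-index subgroup, then transfer) is exactly that. The one slip is in your geometric paragraph: for $n=0$, Harer's value $4g-5$ does not come from a $\Gamma_g$-equivariant spine of $\mathcal{T}_g$ of that dimension, and no such spine is known. In the closed case you should therefore rely only on the vcd/transfer argument, which already suffices.
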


Now notice that $\sgnm$ naturally arises as a
finite étale connected cover $\sgnm\to M_{g,n}$. We can then apply Proposition~\ref{harer} and by using Poincar\'e duality we deduce that
$$
H^1_c ({Pr}_{g,n}^{\hspace{0.05cm}(m_1, \ldots, m_n)}) = 0
$$
for any $g \ge 2$, for $g=1$, $n \ge 2$, and for $g=0$, $n \ge 5$;
and
$$
H^3_c ({Pr}_{g,n}^{\hspace{0.05cm}(m_1, \ldots, m_n)}) = 0
$$
for any $g \ge 3$, for $g=2$, $n \ge 2$, for $g=1$, $n \ge 4$, and
for $g=0$, $n \ge 7$.

\subsection{Base cases.}\label{subB}
 
By Subsections \ref{subI} and \ref{subV} we are left with the following base cases:
\begin{eqnarray}
\label{eq_one} H^1(\overline{Pr}_{0,n}^{\hspace{0.05cm}(m_1, \ldots, m_n)})
&=& 0,
\hspace{0.2cm} n \le 4 \\
\label{eq_two} H^3(\overline{Pr}_{0,n}^{\hspace{0.05cm}(m_1, \ldots, m_n)})
&=& 0, \hspace{0.2cm} n \le 6 \\
\label{eq_three} H^1(\overline{Pr}_{1,1}^{\hspace{0.05cm}(m)}) &=& 0 \\
\label{eq_four} H^3(\overline{Pr}_{2}) &=& 0 \\
\label{eq_five} H^3(\overline{Pr}_{2,1}^{\hspace{0.05cm}(m)}) &=& 0 \\
\label{eq_six} H^3(\overline{Pr}_{1,n}^{\hspace{0.05cm}(m_1, \ldots, m_n)})
&=& 0, \hspace{0.2cm} n \le 3.
\end{eqnarray}

We first address (\ref{eq_one}) and (\ref{eq_two}): on $\bP^1$, any two divisors of the same degree are linearly equivalent, which gives canonical identifications
\begin{equation}
\label{zero}
\overline{Pr}_{0,n}^{\hspace{0.05cm}(m_1, \ldots, m_n)}
\cong \overline{M}_{0,n}.
\end{equation}
Hence, \eqref{eq_one} and \eqref{eq_two} are immediate as Keel proved in \cite{Keel:92} that $H^k(\overline{M}_{0,n})=0$ for every odd $k$.

We now prove (\ref{eq_three}): first of all,
$\overline{Pr}_{1,1}^{\hspace{0.05cm}(1)} = \emptyset$
by degree reasons.
Next, Lemma \ref{lemI} shows that $\overline{Pr}_{1,1}^{\hspace{0.05cm}(0)}$ is
the union of two connected components

$$\overline{Pr}_{1,1}^{\hspace{0.05cm}(0)}= \overline{Pr}_{1,1}^{\hspace{0.05cm}(0), +} \sqcup \overline{Pr}_{1,1}^{\hspace{0.05cm}(0), -}.$$

The component 
$
\overline{Pr}_{1,1}^{\hspace{0.05cm}(0), -} $  is naturally isomorphic to  $\overline{M}_{1,1}
\cong \bP^1, 
$
so $H^1(\overline{Pr}_{1,1}^{\hspace{0.05cm}(0), -}) = H^1(\bP^1)=0$.
It remains to deal with $\overline{Pr}_{1,1}^{\hspace{0.05cm}(0), +}$.
To do so, observe that there exists a surjective morphism
$$
f: \overline{M}_{0,4} \longrightarrow
\overline{Pr}_{1,1}^{\hspace{0.05cm}(0), +}.
$$
which sends a $4$-pointed stable rational curve $(C; p_1, p_2, p_3, p_4)$ to the quasi-stable Prym curve $(E, q_1; L, \alpha)$ defined as follows. The quasi-stable curve $E$ is the double cover of $C$ branched at the $p_i$'s. The marked point is $q_1 \in E$ lying above $p_1$, and the line bundle is
$L = \mathcal{O}_E(q_1 - q_2),$ where $q_2 \in E$ lies above $p_2$. The homomorphism $\alpha: L^{\otimes 2} \rightarrow \mathcal{O}_E$ is induced by the relation $2(q_1 - q_2) \sim 0$ on $E$.
It follows that
$$
H^1(\overline{Pr}_{1,1}^{\hspace{0.05cm}(0), +}) \hookrightarrow
H^1(\overline{M}_{0,4}) = H^1(\bP^1) = 0
$$
which proves (\ref{eq_three}).

We now turn to \eqref{eq_four} and \eqref{eq_five}.
The space $\overline{Pr}_{2,1}^{\hspace{0.05cm}(1)}$ is empty by degree
reasons and 
$$
\overline{Pr}_{2,n}^{\hspace{0.05cm}(0, \ldots,0)}=\overline{Pr}_{2,n}^{\hspace{0.05cm}(0, \ldots, 0), +} \sqcup \overline{Pr}_{2,n}^{\hspace{0.05cm}(0, \ldots, 0), -}.
$$
The elements of the interior in each component are smooth hyperelliptic curves. If $C$ is a smooth hyperelliptic curve and $q_i$ ($i=1, \ldots, 6$)
are the ramification points of the hyperelliptic involution, then $C$ carries
one odd Prym structure (namely, $\mathcal{O}_C$) and $15$ even ones (namely, $\mathcal{O}_C(q_i-q_j)$, with $i$, $j$  distinct). We claim that there are surjective morphisms:
\begin{eqnarray*}
f^{\pm}: \overline{M}_{0,6} &\longrightarrow& \overline{Pr}_{2}^{\pm} \\
%f^-: \overline{M}_{0,6} &\longrightarrow& \overline{Pr}_{2}^- \\
g^{\pm}: \overline{M}_{0,7} &\longrightarrow&
\overline{Pr}_{2,1}^{\hspace{0.05cm}(0), {\pm}} 
%g^-: \overline{M}_{0,7} &\longrightarrow&
%\overline{Pr}_{2,1}^{\hspace{0.05cm}(0), -}
\end{eqnarray*}
In order to define $f^+$ and $f^-$, let $(C; p_1, \ldots p_6)$ be a
$6$-pointed, stable, genus zero curve. The morphism $f^+$ (respectively,
$f^-$) associates to it the admissible covering $Y$ of $C$ branched at the
$p_i$'s and equipped with the line bundle $\mathcal{O}_Y(q_1-q_2)$
(respectively, $\mathcal{O}_Y$),
where $q_i$ denotes the point of $Y$ lying above $p_i$.
As for $g^+$ and $g^-$, let $(C; p_1, \ldots, p_7)$ be a
$7$-pointed stable genus zero curve. The morphism $g^+$ (respectively,
$g^-$) associates to it the admissible covering $Y$ of $C$ branched at the
$p_i$'s ($i = 1, \ldots 6$), pointed at one of the two points lying above $p_7$ (of course
different choices produce isomorphic curves) and equipped with the line bundle
$\mathcal{O}_Y(q_1-q_2)$ (respectively, $\mathcal{O}_Y$),
where $q_i$ denotes the point of $Y$ lying above $p_i$.
Hence we obtain injective maps in cohomology
$H^k(\overline{Pr}_{2}^+)\hookrightarrow H^k(\overline{M}_{0,6})$,
$H^k(\overline{Pr}_{2}^-)\hookrightarrow H^k(\overline{M}_{0,6})$,
$H^k(\overline{Pr}_{2,1}^{\hspace{0.05cm}(0,0), +}) \hookrightarrow H^k(\overline{M}_{0,7})$,
and $H^k(\overline{Pr}_{2,1}^{\hspace{0.05cm}(0,0), -}) \hookrightarrow H^k(\overline{M}_{0,7})$. 
Thus, (\ref{eq_four}) and (\ref{eq_five}) follow by the above mentioned Keel's results (see \cite{Keel:92}).

To conclude we prove (\ref{eq_six}).  There is a natural isomorphism
\begin{equation}\label{eq:prs}
\overline{Pr}_{1,n}^{\hspace{0.05cm}(m_1, \ldots, m_n)} \cong \overline{S}_{1,n}^{\hspace{0.05cm}(m_1, \ldots, m_n)},
\end{equation}
where the second variety is the moduli space of twisted spin curves introduced in \cite{BF:12}. 
Indeed, if $(C,D) \in Pr_{1,n}^{\hspace{0.05cm}(m_1, \ldots, m_n)}$ then $K_C = \mathcal{O}_C$ and $2D \sim \mathcal{O}_C(-m_1 p_1 - \ldots - m_n p_m)$ implies $2(-D) \sim K_C + m_1 p_1 + \ldots + m_n p_m$, hence $(C,-D) \in S_{1,n}^{\hspace{0.05cm}(m_1, \ldots, m_n)}$.

As a consequence, (\ref{eq_six}) coincides with the corresponding statement for the moduli space of spin curves, which holds by \cite[Lemma 3.5]{BF:12}

\section{Proof of Theorem \ref{prymvH2}}\label{sec4}

In this Section we prove Theorem \ref{prymvH2}.

Exactly as in the case of $\mgn$, the injection (\ref{injective}) is compatible with the
Hodge decomposition. Hence, Proposition~\ref{harer} and
Poincar\'e duality imply that $H^{2,0}(\sgnm)=0$ for every $g,n$ if
$H^{2,0}(\overline{Pr}_{0,n}^{\hspace{0.05cm}(m_1, \ldots, m_n)})=0$,
$n \le 5$, and $H^{2,0}(\overline{Pr}_{1,n}^{\hspace{0.05cm}(m_1,
\ldots, m_n)})=0$, $n \le 2$. Notice now that
$H^2(\overline{Pr}_{0,n}^{\hspace{0.05cm}(m_1, \ldots, m_n)})$ is
algebraic by (\ref{zero}) and \cite{Keel:92}. 
Moreover, $H^2(\overline{Pr}_{1,n}^{\hspace{0.05cm}(m_1,
\ldots, m_n)})$, for $n \le 2$, is also algebraic (by \eqref{eq:prs} and \cite[Lemma 3.2]{BF:12}).
Thus $H^{2,0}(\sgnm)$ vanishes. 

From the
exponential sequence
$$
0 \to \Z \to \mathcal{O} \to \mathcal{O}^* \to 0
$$
we deduce that
$$
H^2(\sgnm) \cong H^1(\sgnm, \mathcal{O}^*) = \Pic(\sgnm) \otimes \Q.
$$
This concludes the proof of the first part of Theorem \ref{prymvH2}. 

Let us now provide a set of generators for $\Pic(\sgnm) \otimes \Q$. 
Since $\sgnm$ is normal,
there is an injection:
$$
\Pic(\sgnm) \hookrightarrow A_{3g-4+n}(\sgnm).
$$ Moreover, from the construction of $\sgnm$ it follows that the
singularities of $\sgnm$ are of finite quotient type, so every Weil
divisor is $\Q$-Cartier and there is a surjective morphism:
$$
\Pic(\sgnm) \otimes \Q \twoheadrightarrow A_{3g-4+n}(\sgnm) \otimes \Q.
$$
Hence there is a
natural isomorphism
$$ \Pic(\sgnm) \otimes \Q \stackrel{\cong}{\longrightarrow}
A_{3g-4+n}(\sgnm) \otimes \Q$$
and we may use the exact sequence
$$
A_{3g-4+n}(\sgnm \setminus Pr_{g,n}^{\hspace{0.05cm}(m_1, \ldots, m_n)}) \to $$
$$\to A_{3g-4+n}(\sgnm) \to A_{3g-4+n}(Pr_{g,n}^{\hspace{0.05cm}(m_1, \ldots, m_n)})
\to 0
$$
to conclude that $\Pic(\sgnm) \otimes \Q$ is generated by
the generators of $A_{3g-4+n}(Pr_{g,n}^{\hspace{0.05cm}(m_1, \ldots, m_n)})$ together with the set of boundary classes
of $\sgnm$. 
Finally, recall that $Pr_{g,n}^{\hspace{0.05cm}(m_1, \ldots, m_n)}\to M_{g,n}$ is a finite étale cover and one can see that it satisfies the assumptions of \cite[Theorem 2.1]{Put:12}. Namely, the subgroup $\Gamma$ of the mapping class group that defines such a cover naturally contains the Torelli group, the same way $Pr_{g,n}^{\hspace{0.05cm}(m_1, \ldots, m_n)}\to M_{g,n}$ is known to do, since the twist is irrelevant on the generators of the Torelli group. Therefore, for $g \ge 5$, $A_{3g-4+n}(Pr_{g,n}^{\hspace{0.05cm}(m_1, \ldots, m_n)})$
is freely generated by the pullbacks of the Hodge class $\lambda$ and of the cotangent classes $\psi_i$, $i=1, \ldots, n$, along the natural map  $\sgnmint\to M_{g,n}$, due to \cite[Theorem 2.1]{Put:12}.

\end{document}